\newtheorem{theorem}{Theorem}
\newtheorem*{conjecture*}{Conjecture}
\newtheorem{question}{Question}
\newtheorem*{question*}{Question}
\newtheorem{lemma}[theorem]{Lemma}
\newtheorem*{claim*}{Claim}
\theoremstyle{remark}
\newtheorem*{remark}{Remark}
\newcommand{\degree}{\ensuremath{^\circ}} 
\newcommand{\ar}{A_{(r)}} 
\newcommand{\dr}{D_{(r)}} 
\newcommand\eps{\varepsilon}
\newcommand\R{\mathbb{R}}
\newcommand\Prb{\mathbb{P}}
\newcommand\cP{{\mathcal P}}
\begin{document}
\title{Small components in $k$-nearest neighbour graphs
} \author{Mark
  Walters\footnote{School of Mathematical Sciences, Queen Mary,
University of London, London E1 4NS, England.  
{\tt m.walters@qmul.ac.uk}}
} \maketitle
\begin{abstract}
  Let $G=G_{n,k}$ denote the graph formed by placing points in a
  square of area $n$ according to a Poisson process of density 1 and
  joining each point to its $k$ nearest neighbours.
  In~\cite{MR2135151} Balister, Bollob\'as, Sarkar and Walters proved
  that if $k<0.3043\log n$ then the probability that $G$ is connected tends
  to 0, whereas if $k>0.5139\log n$ then the probability that $G$ is
  connected tends to 1.

 We prove that, around the threshold for connectivity, all vertices
 near the boundary of the square are part of the (unique) giant
 component. This shows that arguments about the connectivity of $G$
 do not need to consider `boundary' effects.

  We also improve the upper bound for the threshold for connectivity
  of $G$ to $k=0.4125\log n$.
\end{abstract}

\subsection{Introduction}
Let $S_n$ denote a $\sqrt n\times \sqrt n$ square and let
$G_{n,k}$ denote the graph formed by placing points
in $S_n$ according to a Poisson process $\cP$ of density 1 and joining
each point to its $k$-nearest neighbours by an undirected edge. Since
we shall be interested in the asymptotic behaviour of this graph as
$n\to\infty$, it is convenient to introduce one piece of notation. For
a graph property $\Pi$ we say that $G_{n,k}$ has $\Pi$ \emph{with high
  probability} (abbreviated to whp) if $\Prb(G_{n,k}\text{ has
  $\Pi$})\to 1$ as $n\to\infty$.

Xue and Kumar~\cite{xue-kumar} proved that the threshold for
connectivity is $\Theta(\log n)$; more precisely they showed that if
$k=k(n)>5.1774\log n$ then $G_{n,k}$ is connected whp, and if
$k=k(n)<0.074\log n$ then $G_{n,k}$ is whp not connected.

Subsequent work by Balister, Bollob\'as, Sarkar and
Walters~\cite{MR2135151} substantially improved the upper and lower
bounds to $0.5139\log n$ and $0.3043\log n$ respectively.  In their
proof they also showed that for any $k=\Theta(\log n)$ the graph
consists of a giant component containing a proportion $1-o(1)$ of all
vertices and (possibly) some other `small' components of (Euclidean)
diameter $O(\sqrt{\log n})$ (for a formal statement see
Lemma~\ref{l:small}).

Moreover, they showed that if $k>0.311\log n$ then $G$ has no small
component within distance $O(\sqrt{\log n})$ of the boundary of
$S_n$. Unfortunately, there is a gap between this bound and the lower
bound of $0.3043$ mentioned above. This means that close to the threshold
for connectivity the obstruction to connectivity could occur near the
boundary of the square or it could occur in the centre (their methods
did rule out the possibility that the obstruction occurs in the corner
of the square). This has caused several problems in later papers
(e.g., \cite{MR2514943}) where the authors had to consider both cases in
their proofs.

Our main result is the following theorem showing that, in fact, the
obstruction must occur away from the boundary of $S_n$. This should
simplify subsequent work in the area as only central components need
to be considered. (Of course, the improvement itself is only of minor
interest, it is the fact that the new upper bound for the existence of
components near the boundary is smaller than the general lower bound
that is of importance.)

\begin{theorem}\label{t:boundary}
Suppose that $G=G_{n,k}$ for some $k>0.272 \log n$. Then there is a
constant $\eps>0$ such that the probability that there exists a vertex within
distance $\log n$ of the boundary of $S_n$ that is not contained in the
giant component is $O(n^{-\eps})$.
\end{theorem}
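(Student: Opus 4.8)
The proof proceeds by a first–moment estimate on a geometric witness. By \Lm{small} we may assume the bad event is witnessed by an actual component $C$ of $G$ of (Euclidean) diameter $O(\sqrt{\log n})$, one of whose vertices lies within distance $\log n$ of $\partial S_n$; then \emph{every} vertex of $C$ lies within distance $\log n+O(\sqrt{\log n})$ of $\partial S_n$. By symmetry we bound the probability that such a $C$ lies near one fixed side of $S_n$, say $\{y=0\}$; a component within $O(\log n)$ of a \emph{corner} is more tightly confined and is ruled out by an easier variant of the argument below (and is in any case essentially treated in \cite{MR2135151}), so we assume $C$ is at distance $\gg\log n$ from the left and right sides. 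Let $v=(v_1,v_2)$ be a vertex of $C$ of smallest $y$–coordinate, put $h=v_2\in[0,\log n+O(\sqrt{\log n})]$, and for a vertex $u$ write $r_u$ for the distance from $u$ to its $k$–th nearest neighbour. Since $v$ minimises $y$ on $C$, all of $C$ — and hence all the nearest neighbours of every $u\in C$ — lies in the half–plane $\{y\ge h\}$. Consequently: (i) the half–disc $B(v,r_v)\cap\{y\ge h\}$ contains $v$ together with its $k$ nearest neighbours, i.e.\ at least $k$ points of $\cP\setminus\{v\}$; and (ii) for every $u\in C$ the region $B(u,r_u)\cap\{y<h\}$ contains no point of $\cP$ at all, since any point within distance $r_u$ of $u$ is a nearest neighbour of $u$ and so lies in $C\subseteq\{y\ge h\}$.

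Set $\Omega=\bigl(\bigcup_{u\in C}B(u,r_u)\bigr)\cap\{0\le y<h\}$, which by (ii) contains no point of $\cP$. The heart of the matter is to show that the probability of the witness — emptiness of $\Omega$ together with (i) — is at most $e^{-\alpha k+o(k)}$ for an absolute constant $\alpha$ with $\alpha c>\tfrac12$ whenever $k=c\log n$ and $c>0.272$. Using $v$ alone: $\Omega$ contains the lower half–disc $B(v,r_v)\cap\{y<h\}$, of area $\tfrac12\pi r_v^2$ and lying entirely in $S_n$ once $h\ge r_v$, while by (i) the equal–area upper half–disc carries at least $k$ further points of $\cP$; balancing the emptiness cost of the former against the ``anomalous density'' cost of the latter already gives $\alpha=\ln 2\approx0.69$, and using a couple of neighbouring low vertices of $C$ improves this to the value $\approx1.6$ implicit in \cite{MR2135151}. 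To reach $\alpha>1/(2\cdot0.272)$ one exploits many low vertices: every $u\in C$ with $y(u)$ close to $h$ has $B(u,r_u)\cap\{y\ge h\}$ carrying at least $k$ points of $\cP$ (so $r_u$ is comparable to $r_v$) and $B(u,r_u)\cap\{y<h\}$ empty, and — as $C$ is connected with at least $k+1$ vertices and diameter $O(\sqrt k)$ — such vertices occur across a horizontal span $\Theta(\sqrt k)$, so the union of their lower half–discs enlarges $\Omega$ substantially. Optimising over how many such vertices to use, against the overlaps between their half–discs and the density penalties they carry, produces the required $\alpha$. The regime $h<r_v$, where the lower half–disc is clipped by the side $\{y=0\}$, is treated separately: the genuinely near–side range $h=O(\sqrt{\log n})$ is already covered by \cite{MR2135151}, and for intermediate $h$ the clipping loss is offset by the tighter confinement of $v$'s nearest neighbours.

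The first moment then closes routinely: a standard discretisation over the width-$\approx\log n$ strip along the bottom side (of total area $O(\sqrt n\log n)$), together with a discretisation of the $(\log n)^{O(1)}$ further parameters that determine $\Omega$, shows that the probability of a bad component near the bottom side is $O\!\bigl(\sqrt n\,(\log n)^{O(1)}e^{-\alpha k}\bigr)=O\!\bigl(n^{1/2-\alpha c+o(1)}\bigr)$; summing over the four sides and four corners, this is $O(n^{-\eps})$ for some $\eps>0$ exactly because $\alpha c>\tfrac12$. The main obstacle is the bound $\Prb(\text{witness})\le e^{-\alpha k+o(k)}$ with $\alpha$ large enough: one must identify precisely which low vertices of $C$ to use, verify that they are present using only the connectivity, size and diameter bounds on $C$, carry through the resulting optimisation over the shape of the lower boundary of $C$, and control the fluctuations of the $r_u$ and the clipped regime $h<r_v$. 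Everything else — the reduction via \Lm{small}, the discretisation, and the corner case — is routine.
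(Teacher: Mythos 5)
Your setup (lowest vertex $v$ at height $h$, emptiness of the lower segments $B(u,r_u)\cap\{y<h\}$, density of the upper half--disc of $v$) is sound as far as it goes, but the proof has a genuine gap at exactly the point you flag as ``the main obstacle'': you never establish $\Prb(\text{witness})\le e^{-\alpha k+o(k)}$ with $\alpha>1/(2\cdot0.272)\approx1.84$, and the witness you propose does not appear strong enough to yield it. Test it against the natural extremal configuration: $k+1$ points of $\cP$ clustered in a disc $D$ of radius $\rho$. Then every $r_u$ is at most about $2\rho$, so your empty region $\Omega=\bigl(\bigcup_{u}B(u,r_u)\bigr)\cap\{y<h\}$ is contained in the part of $3D$ below the lowest point of $D$, of area roughly $2.6\pi\rho^2$, while your single dense region $B(v,r_v)\cap\{y\ge h\}$ has area up to $2\pi\rho^2$ and must contain $k$ points. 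A Lemma~\ref{l:kk}--type computation with one dense and one empty set then gives only about $\bigl(\tfrac{2}{2+2.6}\bigr)^k\approx 2.3^{-k}$, i.e.\ $\alpha\approx0.84$; even being generous about the geometry one does not get past $\alpha\approx1.2$. Piling up more low vertices does not help here because their upper half--discs all contain the same cluster (so their density constraints are not independent) and their lower segments all lie in the same bounded neighbourhood of $D$. So ``optimising over how many low vertices to use'' cannot, by itself, produce the required $\alpha$; the claim that a ``horizontal span $\Theta(\sqrt k)$'' of low vertices is forced is also false (the component could be vertically elongated).

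The paper closes this gap with three ingredients your sketch lacks. First, a circumscribing construction: four tangent lines to the convex hull of $F$ at angles $90\degree$ and $\pm30\degree$ to the nearby side $E$ produce four \emph{disjoint} empty regions $A_1,\dots,A_4$, each of area at least that of the region $A'$ containing $k+1$ points of $F$, so the empty area is at least $4|A'|$ (this, not ``neighbouring low vertices'', is what gives the $5^{-k}$ of~\cite{MR2135151}). Second, a \emph{second} dense region: taking $P\in F$ and $Q\notin F$ at minimal distance $r_0$, the lune $B=D(Q,r_0)\setminus D(P,r_0)$ must contain $k$ points disjoint from those in $A'$; Lemma~\ref{l:kk} is designed precisely to exploit two disjoint $k$-point regions against one empty region. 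Third, the emptiness of the blow-up $\ar\setminus A$, whose area is bounded below via the isoperimetric inequality in the half-plane; optimising over $x=r/\sqrt{|A'|/\pi}$ between the two resulting bounds gives $6.3^{-k}$, and $0.272\log 6.3>\tfrac12$ is what makes the $O(\sqrt{n\log n})$ union bound close. Your first-moment framework, the reduction via Lemma~\ref{l:small}, and the treatment of corners are all fine, but without these (or comparable) extra dense/empty regions the central estimate fails, so the proposal as written does not prove the theorem.
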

\begin{remark}
  The distance $\log n$ to the boundary is much larger than the
  typical edge length and (non-giant) component sizes which are
  $O(\sqrt {\log n})$. Moreover, the theorem would still be true with
  $\log n$ replaced by a small power of $n$.
\end{remark}
\noindent%
Our second result is the following improvement on the upper bound
for connectivity of $G$.
\begin{theorem} \label{t:centre}
Suppose that $G=G_{n,k}$ for some $k>0.4125 \log n$. Then whp $G$ is
connected.
\end{theorem}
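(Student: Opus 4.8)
The plan is to deduce \Th{centre} from \Lm{small}, \Th{boundary}, and a first-moment estimate that rules out small components in the interior of $S_n$. By \Lm{small}, whp $G$ consists of a giant component together with (possibly) some components of Euclidean diameter $O(\sqrt{\log n})$, which I will call \emph{small}. Since $0.4125>0.272$, \Th{boundary} applies, so whp no vertex within distance $\log n$ of $\partial S_n$ lies outside the giant component; hence whp every small component lies at distance more than $\log n$ from $\partial S_n$. As such a component has diameter $o(\log n)$, it and an $O(\sqrt{\log n})$-neighbourhood of it lie comfortably inside $S_n$, so in the estimates below boundary effects may be ignored and we may argue as in the infinite plane. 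It therefore suffices to show that whp $G$ has no small component at distance more than $\log n$ from $\partial S_n$.

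To this end I would first record the local configuration that such a component forces, following~\cite{MR2135151}. A small interior component $C$ has the property that every one of its points has all $k$ of its nearest neighbours in $C$, so $|C|\ge k+1$ and $C$ lies in some disc $B=B(x,\rho)$ with $\rho=O(\sqrt{\log n})$; consequently every point of $C$ has its $k$ nearest neighbours within distance $3\rho$ of $x$. Writing $W(x;\rho,\rho')$ for the annulus between radii $\rho$ and $\rho'\ge 3\rho$, one checks that if $W$ contains sufficiently few points of $\cP$ then every point of $B$ has all its neighbours in $B$; and if, in addition, no large region of $S_n$ is abnormally depleted (an event of probability $1-o(1)$, uniformly over positions and scales, by a routine Poisson estimate) then no point outside $B\cup W$ has a point of $B$ among its $k$ nearest neighbours. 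Thus a small interior component forces the existence of a centre $x$, a scale $\rho=O(\sqrt{\log n})$ and a width $\rho'$ such that $B(x,\rho)$ contains at least $k$ points while the surrounding moat $W(x;\rho,\rho')$ is sparse.

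The final step is the first-moment bound. Discretising the interior of $S_n$ into $n^{1+o(1)}$ candidate pairs $(x,\rho)$, the event above splits — up to an $o(1)$ term in the exponent — into two essentially independent conditions: that a disc of area $\Theta(k)$ contains $\Theta(k)$ points, and that a surrounding annulus of area $\Theta(\log n)$ contains far fewer points than its mean; each is controlled by Poisson large-deviation bounds. The resulting probability is $n^{-\psi(\alpha)+o(1)}$, where $\alpha=k/\log n$ and $\psi$ is an explicit increasing function obtained by optimising jointly over $\rho$, over the moat width $\rho'$, and over how depleted the moat must be (it is wasteful to demand that the moat be empty rather than merely sparse). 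A union bound then gives an expected number of bad configurations of order $n^{1-\psi(\alpha)+o(1)}$, which tends to $0$ as soon as $\psi(\alpha)>1$; solving $\psi(\alpha)=1$ gives the threshold $\alpha=0.4125$. Combined with the first paragraph, this shows that for $k>0.4125\log n$ whp $G$ has no small component at all, i.e.\ whp $G$ is connected.

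I expect the optimisation defining $\psi$ to be the main obstacle: one must identify the worst-case local obstruction exactly — the right scale $\rho$, the right moat width, and the right balance between the entropy of point configurations inside $B$ and the improbability of the depleted moat — subject to all the geometric constraints that genuinely make a component. The reductions in the first two paragraphs, the Poisson concentration, and the union bound are routine; it is this variational computation that yields the constant $0.4125$ and the improvement over the previous bound of $0.5139\log n$.
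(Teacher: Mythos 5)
There is a genuine gap, and it is precisely the one the paper warns about in the discussion following the statement of Theorem~\ref{t:centre}. Your local obstruction --- a disc $B(x,\rho)$ containing at least $k$ points surrounded by a sparse moat --- is a \emph{necessary} condition that is too weak to give the constant $0.4125$. Indeed, the configuration ``$k+1$ points in a disc $D$, no points in $3D\setminus D$'' implies your obstruction, and its probability (optimised over $\rho$) is about $9^{-(k+1)}$; hence your exponent satisfies $\psi(\alpha)\le\alpha\log 9$, and the union bound over $n^{1+o(1)}$ locations closes only for $k>\log n/\log 9\approx 0.455\log n$. No amount of optimisation over $\rho$, the moat width, or the degree of depletion can push a first-moment argument based on this event below $1/\log 9$, because for $k<(1/\log 9-\eps)\log n$ the event actually occurs whp somewhere in $S_n$ --- yet the graph may still be connected, since external points can send edges \emph{in}. (A secondary error: a merely \emph{sparse} moat does not isolate the disc; a point in a sparse moat at distance $d$ from $x$ with $\rho<d<2\rho$ has its nearest $B$-point at distance about $d-\rho<\rho$ and typically fewer than $k$ points that close, so it would join to $B$. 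The moat must be essentially empty, which is exactly why the probability is pinned at $9^{-k}$.)

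The missing idea is to extract a \emph{second}, essentially disjoint, unlikely event from the points just outside the component. The paper takes $P\in F$ and $Q\notin F$ minimising $d(F,G\setminus F)$, sets $r_0=d(P,Q)$, and observes that since $P\not\sim Q$ the $k$ nearest neighbours of $Q$ all lie in the lune $B=D(Q,r_0)\setminus D(P,r_0)$, which is disjoint from the set $A'$ carrying the $k+1$ points of the component. One then plays off two regimes via Lemma~\ref{l:kk}: if $r_0$ is small, the lune $B$ is small and cramming $k$ points into it is costly; if $r_0$ is large, the empty region $\ar\setminus A$ is large. Together with the six empty hexagonal regions $A_i$ (each of area at least $|A'|$), optimising over $x=r/\sqrt{|A'|/\pi}$ balances the two bounds at $x=\sqrt7-1$ and yields $11.3^{-k}$ per location, whence the threshold $1/\log 11.3\approx 0.4125$. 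Your reduction via Lemma~\ref{l:small} and Theorem~\ref{t:boundary} in the first paragraph is legitimate (the paper itself notes Theorem~\ref{t:boundary} as an alternative way to dispose of boundary components), but the core variational computation you defer to is not merely ``the main obstacle'': as set up, it provably cannot reach the claimed constant.
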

To illustrate Theorem~\ref{t:centre} let $D$ be a disc of radius $r$
and consider the event that there are $k+1$ points inside $D$ and no
points in $3D\setminus D$ (where $3D$ denotes the disc with same
centre as $D$ and three times the radius). If this event occurs then
the $k$-nearest neighbours of any point in $D$ also lie in $D$: in
particular, there are no `out'-edges from $D$ to the rest of the
graph.  If we choose $r$ such that $9\pi r^2\approx k+1$ (to maximise
the probability of this event) then the probability of a specific
instance of this event is about $9^{-(k+1)}$. Since we can fit
$\Theta(n/\log n)$ disjoint copies of this event into $S_n$ we see
that if $k< (\frac1{\log9}-\eps)\log n$ (for some $\eps>0$) then whp
this event occurs somewhere in $S_n$ and thus that $G$ has a subgraph with
no out-degree. Since $1/\log 9\approx0.455>0.4125$,
Theorem~\ref{t:centre} shows that there is a range of $k$ for which
the graph is connected whp but contains pieces with no outdegree.
(The corresponding result for in-degree was proved
in~\cite{MR2135151}.)

The proofs of these two theorems are broadly similar: they use the
ideas from \cite{MR2135151} but also consider points which are near
the small component but not contained in it. Indeed, if one looks at
the lower bound proved in \cite{MR2135151} we see that the density of
points near the small component is higher than average. This is an
unlikely event and we incorporate it into our bounds. Indeed, the
above observation that there are small pieces of the graph with no
out-degree shows that any proof of Theorem~\ref{t:centre} (or any
stronger bound) must consider points outside of a potential small
component and show that they send edges in.

The key step is to split into two regimes depending on whether there
is a point `close' to the small component.  If there is no such point
then the `excluded area' from the small component is quite large (which
is unlikely), whereas if there is such a point then it must have a small
$k$-nearest neighbour radius (which is also unlikely).

\subsection{Notation and Preliminaries}
We start with some notation.  For any point $x$ and real number $r$
let $D(x,r)$ denote the closed disc of radius $r$ about $x$.  We shall
also use the term \emph{half-disc of radius $r$ based at $x$} to mean one of
the four regions obtained by dividing the disc $D(x,r)$ in half vertically
or horizontally.

For a set $A$ in $S_n$ let $|A|$ denote the measure of $A$, and $\#A$
denote the number of points of $\cP$ in $A$. For any real number~$r$
let $\ar $ be the $r$-blowup of $A$ defined by \[\ar =\{x\in \R^2:
d(x,A)<r\}.\] Note that we do allow $\ar $ to contain points outside of
$S_n$.

Finally, whenever we use the term diameter we shall always mean the
\emph{Euclidean} diameter: we do not use graph diameter at any point
in the paper.

We shall need a few results from the paper of Balister, Bollob\'as,
Sarkar and Walters~\cite{MR2135151}. Since our notation is slightly
different we quote them here for convenience.  The first is a slight
variant of Lemma~6 of~\cite{MR2135151} which follows immediately from
the proof given there (see also Lemma~1 of~\cite{MR2514943}).
\begin{lemma}\label{l:small}
 For fixed\/ $c>0$ and\/ $L$, there exists $c_1=c_1(c,L)>0$, depending
 only on $c$ and\/~$L$, such that for any $k\ge c\log n$, the
 probability that\/ $G_{n,k}$ contains two components each of\/
 $($Euclidean$)$ diameter at least\/ $c_1\sqrt{\log n}$, is $O(n^{-L})$.
\end{lemma}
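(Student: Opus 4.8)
The plan is to adapt the tessellation argument of Balister, Bollob\'as, Sarkar and Walters; indeed this statement is essentially Lemma~6 of~\cite{MR2135151}, so one option is to extract it from their proof, but here is how I would organise a self-contained argument. First reduce to $k=\Theta(\log n)$: for $k$ larger than a suitable multiple of $\log n$ the graph $G_{n,k}$ is connected with probability $1-O(n^{-L})$ by (a polynomial-error form of) the classical upper bound for connectivity, and then the event in question is vacuous; so assume $c\log n\le k\le C_0\log n$ with $C_0=C_0(c,L)$. Next fix a good event $\mathcal G$, holding with probability $1-O(n^{-L-1})$, on which: every vertex $v$ has $r_k(v)\le\rho$, where $r_k(v)$ denotes the distance from $v$ to its $k$-th nearest neighbour and $\rho=\rho(c,L)\sqrt{\log n}$ is chosen via a Poisson lower-tail bound applied to $\#D(v,\rho)$ and a union bound over the points of $\cP$; every disc of radius $\delta_0\sqrt{\log n}$ contains at most $k$ points; and every disc of radius $100\rho$ contains at most $K\log n$ points. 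Here $\delta_0=\delta_0(c,L)>0$ and $K=K(c,L)$, and all constants below depend only on $c$ and $L$. It suffices to bound the probability of the bad event intersected with $\mathcal G$.

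On $\mathcal G$ one records some deterministic facts. For any component $C$ and $v\in C$, the disc $D(v,r_k(v))\subseteq D(v,\rho)$ consists of $v$ together with its $k$ nearest neighbours, all of which lie in $C$ (each is joined to $v$ by an edge); hence $R_C:=\bigcup_{v\in C}D(v,r_k(v))$ is a connected region whose Poisson points are exactly those of $C$, with $R_C\subseteq C_{(\rho)}$ and $R_C$ containing no vertex of any other component. If $v\in C_1$ and $w\in C_2$ lie in distinct components then $|v-w|\ge\max(r_k(v),r_k(w))$, for otherwise $vw$ would be an edge. Finally, if $v$ is a vertex of $C$ of maximal $x$-coordinate then all $k$ of its nearest neighbours lie in a single half-disc of radius $r_k(v)$ based at $v$ (they lie in $C$, hence have $x$-coordinate at most that of $v$); since this half-disc then contains at least $k+1$ points while having area $\frac{\pi}{2}r_k(v)^2$, a Poisson tail bound and a union bound over $\cP$ give, on a further \whp\ event, a lower bound $r_k(v)\ge\gamma\sqrt{\log n}$ at such a vertex of every component, for a suitable $\gamma=\gamma(c,L)>0$ (and similarly for the three other extreme directions).

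Now tessellate $S_n$ into $\Theta(n/\log n)$ cells of side $\ell=\Theta(\sqrt{\log n})$. Using Poisson tail bounds and a union bound over cells, one shows that \whp\ all but a \emph{non-percolating} sparse set of cells are mutually linked in $G_{n,k}$, so that there is a connected subgraph $\Gamma\subseteq G_{n,k}$ meeting a constant fraction of the cells; its component is the (essentially unique) giant, of diameter $\Theta(\sqrt n)$. It remains to rule out any \emph{other} component $C$ of diameter $\ge c_1\sqrt{\log n}$, with $c_1=c_1(c,L)$ large. Such a $C$ is disjoint from $\Gamma$; taking $c_1$ large enough, $R_C$ spans at least $m_0:=m_0(c,L)$ cells, forming a connected polyomino none of whose cells contains a vertex of $\Gamma$. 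Moreover, since $C$ is a component, every Poisson point lying within distance $\rho$ of $C$ but not in $C$ has all $k$ of its nearest neighbours avoiding $C$; this forces atypically many points into a bounded neighbourhood of $C$, an event of probability $e^{-\Omega(m_0\log n)}$. Summing over the at most $n\cdot 4^{m_0}$ connected families of $m_0$ cells that $C$ could occupy gives a total of $O(n^{-L})$ once $m_0$ is chosen large, as it may be. The main obstacle is precisely this last estimate: quantifying, uniformly over the discretised location and shape of $C$, the density anomaly forced on the vicinity of a large non-giant component (and confirming that the giant is genuinely unique). This is the heart of Lemma~6 of~\cite{MR2135151}, and in practice the cleanest course is to cite it.
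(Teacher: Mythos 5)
The paper offers no proof of this lemma: it is quoted as a known result, a slight variant of Lemma~6 of \cite{MR2135151} that ``follows immediately from the proof given there''. Your proposal ultimately takes the same route --- you correctly identify the statement as that lemma and defer to the citation --- and your accompanying sketch, while a structurally reasonable outline of the argument, leaves its decisive step (the uniform exponential cost, summable over the discretised locations and shapes, of a second component of large diameter) to that reference, as you yourself acknowledge.
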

The second bounds the probability of a small component near one side,
or two sides of $S_n$; it is explicit in the proof of Theorem~7
of~\cite{MR2135151}. (Note, Theorem~\ref{t:boundary} improves the
first of these bounds.)
\begin{lemma}\label{l:boundary}
  Suppose that $k=\Theta(\log n)$. The probability that there is a
  small component containing a vertex within $\log n$ of one boundary
  of $S_n$ is $O(n^{\frac12+o(1)}5^{-k})$ and the probability that
  there is a small component containing a vertex within $\log n$ of
  two sides of $S_n$ is $O(n^{o(1)}3^{-k})$.
\end{lemma}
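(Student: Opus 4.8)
The plan is to combine Lemma~\ref{l:small}, which confines all but one component to diameter $O(\sqrt{\log n})$, with a first‑moment bound taken over a fine discretisation of the boundary strip.

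\textbf{Reduction and discretisation.}
I would first apply Lemma~\ref{l:small} with a large constant $L$ (chosen in terms of the implied constant in $k=\Theta(\log n)$): outside an event of probability $O(n^{-L})$, which is absorbed into the error term, every component of $G_{n,k}$ of diameter at least $D:=c_1\sqrt{\log n}$ is the unique giant one. Hence a vertex within $\log n$ of the boundary that is not in the giant component lies in a component of diameter $<D$, and since $D=o(\log n)$ that whole component lies within $(1+o(1))\log n$ of the boundary. Fix a side $\ell$ of $S_n$ and tessellate the strip of points within $2\log n$ of $\ell$ by $O(\sqrt n)$ squares of side $D$; any component of diameter $<D$ meeting a tile $Q$ is contained in the concentric square $Q^{+}$ of side $3D$. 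A union bound over tiles (and the four sides) then reduces the ``one side'' statement to the per‑tile estimate
\[
\Prb\bigl(G_{n,k}\text{ has a component contained in }Q^{+}\bigr)\;=\;n^{o(1)}\,5^{-k}\qquad\text{for every tile }Q,
\]
and the ``two sides'' statement to the same bound with $5$ replaced by $3$, summed over the $O((\log n)^2/\log n)=n^{o(1)}$ tiles meeting the corner region.

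\textbf{The per‑tile estimate (the crux).}
Fix $Q$ and suppose $C\subseteq Q^{+}$ is a component. Being a \emph{component} gives two properties: (a) \emph{no out‑edges} --- for every $v\in C$ the open disc $D(v,r_k(v))$, with $r_k(v)$ the distance from $v$ to its $k$th nearest neighbour, contains no point of $\cP\setminus C$; and (b) \emph{no in‑edges} --- for every $w\in\cP\setminus C$ the disc $D(w,d(w,C))$ contains at least $k$ points of $\cP$, necessarily all outside $C$. Property (a) forces an empty ``moat'' around $C$, and property (b) --- which, as the introduction stresses, any argument of this kind must use --- forces that moat to be wide, or else forces the density of $\cP$ just outside it (and inside $S_n$) to be anomalously high; either way one extracts a disc $B$, of radius $R=\Theta(\sqrt{\log n})$ and concentric with $C$, such that $B\cap S_n$ contains no point of $\cP$ other than the (at least $k+1$) points of $C$. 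One then bounds the probability of such a configuration by a first moment: the position and radius of $B$ and the position of $C$ inside it can be specified to precision $n^{-O(1)}$ --- only $n^{O(1)}$ possibilities, contributing a factor $n^{o(1)}$ --- and for fixed such data the probability is at most $\frac{a^{k}}{k!}\,\exp(-|B\cap S_n|)$, where $a=|C\cap S_n|$: the factorial term bounds the packing of $k+1$ points into $C$, and the exponential the emptiness of the remainder of $B\cap S_n$. Optimising over $R$ (equivalently over $a$), and using that for a tile abutting $\ell$ one may centre $B$ essentially on $\ell$, so that $|B\cap S_n|$ drops to about $5a$ at the optimum $a\approx k/5$ --- this is the heuristic ``empty $3D\setminus D$'' exponent $9a$ of the introduction, reduced because roughly half of the surrounding region now protrudes from $S_n$ and costs nothing --- gives $5^{-k+o(k)}$; at a corner about three quarters of the surrounding region protrudes, the exponent is about $3a$, and one gets $3^{-k+o(k)}$; for tiles farther inside, $B$ lies entirely in $S_n$ and the interior estimate of~\cite{MR2135151} gives a bound that is smaller still. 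Summing over the $O(\sqrt n)$ tiles along $\ell$ (respectively the $n^{o(1)}$ tiles at a corner) yields $O(n^{1/2+o(1)}5^{-k})$ (respectively $O(n^{o(1)}3^{-k})$), as required.

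\textbf{The main obstacle.}
Everything up to the per‑tile estimate is routine bookkeeping. The real work is that estimate, and within it the step of turning property (b) into an honest lower bound on the empty‑or‑over‑dense area inside $S_n$ once the part of $B$ outside $S_n$ has been written off for free: it is this step that forces the constant up to $5$ rather than the $2$ (or $e$) produced by the crude ``the upper half‑disc above the topmost vertex of $C$ is empty'' argument, which by itself does not even recover the earlier threshold $k>0.311\log n$ of~\cite{MR2135151}. This is precisely the estimate that Theorem~\ref{t:boundary} sharpens --- pushing the constant past $e^{1/(2\cdot 0.272)}$ --- by exploiting more of the density excess near $C$ that property (b) already signals.
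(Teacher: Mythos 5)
First, note that the paper does not actually prove this lemma: it is quoted, with an explicit pointer, from the proof of Theorem~7 of~\cite{MR2135151}, so the comparison below is with that cited argument (the circumscribed-tangent construction, which this paper reuses in Lemmas~\ref{l:centre} and its boundary analogue).

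Your reduction and discretisation are fine and are exactly the standard bookkeeping: Lemma~\ref{l:small} confines everything to components of diameter $O(\sqrt{\log n})$, and a union bound over $O(\sqrt{n/\log n})$ tiles along a side (respectively $n^{o(1)}$ tiles at a corner) converts a per-tile estimate of $n^{o(1)}5^{-k}$ (respectively $3^{-k}$) into the claimed bounds. The gap is in the per-tile estimate, which is the entire content of the lemma and which you flag but do not carry out. The specific mechanism you sketch does not work as stated: you assert that properties (a) and (b) let one ``extract a disc $B$ concentric with $C$'' with $B\cap S_n$ empty apart from $C$, and you then ``optimise over $R$ (equivalently over $a$)''. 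But in an upper-bound argument you do not get to choose $R$; the configuration does, and the configuration will make the empty moat as cheap as possible. What is needed is a \emph{lower bound on the forced empty area in terms of the occupied area}, uniform over all components $C$, and no full concentric annulus of area $8a$ (interior) or $4a$ (at a side) is forced by $C$ being a component --- the region forced empty by ``no out-edges'' is a union of pieces of the $k$-nearest-neighbour discs $D(P_i,r_k(P_i))$ of the extreme points $P_i$ of $C$, which are centred at different points and whose radii can be small if $C$ is tightly clustered. The way the cited proof pins down the trade-off is the tangent construction: at a side one takes the four tangents to the convex hull of $C$ at angles $90\degree$ and $\pm30\degree$ to that side, obtaining four \emph{disjoint} empty regions $A_i=D_i\cap H_i$, each of area at least that of the region $A_0$ containing the $k+1$ points; this gives a bound of the shape $e^{-5a}a^{k+1}/(k+1)!\lesssim 5^{-k}$ after optimising over $a$ only (which is legitimate, since the bound holds for every $a$). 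At a corner only two tangents survive, whence $3^{-k}$. Your numerology ($9a\to5a\to3a$ as half, then three quarters, of the surround protrudes) happens to land on the right constants, but via a forced-empty-annulus claim that is false in general; the same reasoning applied in the interior would give $9^{-k}$, which is stronger than anything proved in~\cite{MR2135151} or here.

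A secondary point: your appeal to property (b) (``no in-edges forces the moat to be wide or the outside density to be high'') is the dichotomy that \emph{this} paper introduces to improve $5$ to $6.3$ and $7$ to $11.3$ in Theorems~\ref{t:boundary} and~\ref{t:centre}; it plays no role in Lemma~\ref{l:boundary} itself, whose constants come purely from the out-edge/tangent construction. As written, your sketch conflates the cited lemma with the paper's new argument without supplying the geometric lemma (disjoint empty regions of controlled area) that either one needs.
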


The final result follows easily from concentration results for the
Poisson distribution (see e.g.~\cite{MR2437651}) and most of it is
implicit in Lemma~2 of~\cite{MR2135151}.
\begin{lemma}\label{l:edges}
  For any fixed $c$ and $L$ there is a constant $c_2(c,L)$ such that
  for any $k$ with $c\log n<k<\log n$ the probability that there is
  any edge of length at least\/ $c_2\sqrt{\log n}$, or any two points
  within distance $\frac{1}{c_2}\sqrt{\log n}$ of each other not
  joined by an edge, or a point $x\in\cP$ with a half-disc of radius
  $c_2\sqrt{\log n}$ based at $x$ contained entirely inside $S_n$ that
  contains no points of $\cP$, is $O(n^{-L})$.
\end{lemma}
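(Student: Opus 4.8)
The plan is to bound separately the three forbidden configurations and to choose $c_2=c_2(c,L)$ at the very end as the maximum of finitely many thresholds, one coming from each part. In each case I would tile $S_n$ by $\Theta(n)$ cells of bounded diameter (unit squares with centres $c_Q$, say), localise the event ``there is a bad point/edge/half-disc'' to a cell, and note that the localised event is contained in a statement about the number of Poisson points in a single fixed disc (or quarter-disc). These become Poisson large-deviation events with the right sign: a long edge forces a large disc to contain at most $k$ points; a short non-edge forces a small disc to contain at least $k+1$ points; and an empty half-disc is just an empty region of area $\Theta(c_2^2\log n)$. I would then apply the standard Poisson tail estimate $\Prb(X\le t)\le e^{-\mu}(e\mu/t)^t$ for $t\le\mu$, and the same bound for $\Prb(X\ge t)$ when $t\ge\mu$ (this is what is meant by \cite{MR2437651}); summing the resulting per-cell bound $n^{-A(c_2)+o(1)}$ over the $\Theta(n)$ cells gives $O(n^{-L})$ as soon as $A(c_2)>L+1$, and $A(c_2)\to\infty$ as $c_2\to\infty$.

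Concretely: if $xy$ is an edge of length $\ell:=c_2\sqrt{\log n}$, then taking $w$ to be whichever endpoint has the other among its $k$ nearest neighbours gives $\#D(w,\ell)\le k$, while $\E\,\#D(w,\ell)\asymp c_2^2\log n$, which is much larger than $k$ because $k<\log n$. If $x,y\in\cP$ lie within $d:=\sqrt{\log n}/c_2$ of each other and are not adjacent, then each is outside the other's list of $k$ nearest neighbours, so $\#D(x,d)\ge k+1$, while $\E\,\#D(x,d)\asymp c_2^{-2}\log n$, which is much smaller than $k$ because $k>c\log n$ (this is where $c_2>\sqrt{\pi/c}$ is needed). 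Finally, a half-disc of radius $\rho:=c_2\sqrt{\log n}$ based at $x$ contains a full disc of radius $\rho/2$, centred on the axis of symmetry of the half-disc at distance $\rho/2$ from $x$; an empty half-disc therefore contains an empty disc of area $\Theta(c_2^2\log n)$, and moreover that disc lies in $S_n$ whenever the half-disc does. After localising to a cell and shrinking/enlarging the radii by the cell diameter (an $o(1)$ correction), each case reduces to a single Poisson tail bound of the required strength, with the number of cells (and, in the last case, the four orientations of the half-disc) absorbed into the factor $n$.

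The step I expect to be the main obstacle is the boundary behaviour in the long-edge part. When $w$ lies within $O(\sqrt{\log n})$ of a side of $S_n$, the disc $D(w,\ell)$ pokes out of the square, $|D(w,\ell)\cap S_n|$ can be small, and then ``$\#(D(w,\ell)\cap S_n)\le k$'' need not be a rare event, so a crude union bound over all cells would fail near the perimeter. The fix is to use a quarter-disc instead of a disc: for any $w\in S_n$ and any $\ell\le\sqrt n/2$, the quadrant of $D(w,\ell)$ pointing away from the two nearest sides lies entirely inside $S_n$, so $|D(w,\ell)\cap S_n|\ge\frac14\pi\ell^2$, which is still much larger than $k$ for $c_2$ large; localising this quarter-disc to a cell gives a genuine lower-deviation event for a region inside $S_n$, and the argument goes through unchanged. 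No such issue arises in the short-non-edge case, where clipping the disc by $S_n$ only decreases its mean and hence only helps, nor in the empty-half-disc case, where the inscribed disc is already inside $S_n$. Taking $c_2$ to dominate the three thresholds obtained above completes the proof.
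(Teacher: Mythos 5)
Your proposal is correct and is essentially the argument the paper has in mind: the paper gives no written proof of this lemma, merely citing Poisson concentration and Lemma~2 of~\cite{MR2135151}, and your localisation to $\Theta(n)$ cells plus the three Poisson tail bounds (with the quarter-disc device to handle discs clipped by the boundary in the long-edge case) is exactly the standard argument that citation stands in for. The only cosmetic point is that the shrunk quarter-disc should be based at the appropriate corner of the cell rather than its centre to guarantee containment, but this does not affect the proof.
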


We will use the following simple but technical lemma several
times. 
\begin{lemma}\label{l:kk}
  Suppose that $A,B,C$ are three sets in $S_n$ with
  $|A|\le |C|$ and $|B|\le |C|$ then 
\[
\Prb(\text{$\#A\ge k$, $\#B\ge k$, $\#(A\cap B)=0$ and
  $\#C=0$})\le\left(\frac{4|A||B|}{(|A|+|B|+|C|)^2}\right)^k.
\]
\end{lemma}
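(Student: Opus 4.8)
The plan is to compute the probability directly by conditioning on the number of points of $\cP$ lying in the three relevant regions and exploiting independence of the Poisson process on disjoint sets. First I would partition the plane into the pieces $A\setminus(B\cup C)$, $B\setminus(A\cup C)$, and $C$ (after discarding $A\cap B$, which must be empty, and noting we may assume $A,B,C$ are pairwise disjoint outside these overlaps by replacing $B$ with $B\setminus A$ and $C$ with $C\setminus(A\cup B)$, which only decreases the left-hand side since the stated event is monotone in the right direction — $C$ being empty is easier for a smaller $C$, and $\#A\ge k$, $\#B\ge k$ are unaffected by shrinking $B,C$ away from $A$). Actually, cleaner: since $\#(A\cap B)=0$ already forces no points in $A\cap B$, and $\#C=0$ forces no points in $C$, the event says all $\ge k$ points counted in $A$ actually lie in $A'=A\setminus(B\cup C)$, all $\ge k$ points counted in $B$ lie in $B'=B\setminus(A\cup C)$, and these three sets $A',B',C$ are disjoint. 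Write $a=|A'|$, $b=|B'|$, $c=|C|$; note $a\le|A|\le|C|=c$ is \emph{not} quite what we need, so I would instead just carry $|A|,|B|,|C|$ through and bound at the end.

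The main computation: by independence over the disjoint regions $A'$, $B'$, $C$, and the fact that $\#C=0$ has probability $e^{-c}$ with $c=|C|$,
\[
\Prb(\cdots)\le e^{-|C|}\,\Prb(\Bin\text{-type tail: }\#A'\ge k)\,\Prb(\#B'\ge k),
\]
where $\#A'\sim\mathrm{Po}(a)$, $a\le|A|$, and similarly for $B'$. For a Poisson variable of mean $\mu$ one has $\Prb(\mathrm{Po}(\mu)\ge k)\le e^{\mu-k}(\mu/k)^k\cdot$(correction), but the clean bound I want is $\Prb(\mathrm{Po}(\mu)\ge k)\le (e\mu/k)^k e^{-\mu}\le$ something; more useful here is simply $\Prb(\mathrm{Po}(\mu)\ge k)\le \mu^k/k!\cdot e^{-\mu}\cdot e^{\mu}=\mu^k/k!$ — no: the honest elementary bound is $\Prb(\mathrm{Po}(\mu)\ge k)\le \frac{\mu^k}{k!}e^{-\mu}\sum_{j\ge 0}(\mu/?)$... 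I would use the standard $\Prb(\mathrm{Po}(\mu)\ge k)\le e^{-\mu}\sum_{j\ge k}\mu^j/j! \le e^{-\mu}\frac{\mu^k}{k!}\cdot\frac{1}{1-\mu/k}$ when $\mu<k$, or more robustly the Chernoff form $\Prb(\mathrm{Po}(\mu)\ge k)\le (e\mu/k)^k e^{-\mu}$. Combining, the bound becomes $e^{-|A|-|B|-|C|}\cdot\frac{(e^2|A||B|)^k}{k^{2k}}\cdot e^{|A|+|B|}\le \dots$; I then need to massage $e^{-|C|}(e^2|A||B|)^k/k^{2k}$ into $\bigl(4|A||B|/(|A|+|B|+|C|)^2\bigr)^k$.

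The last maneuver is where the real work is, and it is the step I expect to be the main obstacle: turning the additive factor $e^{-|C|}$ together with the $k^{-2k}$ into the denominator $(|A|+|B|+|C|)^{2k}$. The trick is that we get to \emph{choose} how to split: if $|A|+|B|+|C|\le 2k$ then $k^{-2k}\le (2/( |A|+|B|+|C|))^{2k}$ directly handles it and the $e^{-|C|}$ is just a bonus $\le 1$, giving $(e^2|A||B|)^k(2/(|A|+|B|+|C|))^{2k}=(4e^2|A||B|/(|A|+|B|+|C|)^2)^k$ — but that has an extra $e^{2k}$, too weak. So instead I would tune the Poisson tail bound more carefully, using that $\Prb(\mathrm{Po}(\mu)\ge k)\le (\mu/k)^k e^{k-\mu}$ gives, after multiplying the two tails and $e^{-|C|}$, a factor $e^{2k-|A|-|B|-|C|}(|A||B|)^k k^{-2k}$; now if $|A|+|B|+|C|\ge 2k$ use $e^{2k-|A|-|B|-|C|}\le 1$ and $|A|+|B|+|C|\ge 2k$ so $k^{-2k}\le (2/(|A|+|B|+|C|))^{2k}$, yielding $(4|A||B|/(|A|+|B|+|C|)^2)^k$; and if $|A|+|B|+|C|< 2k$ then since $|A|,|B|\le|C|$ we have $|A|+|B|+|C|<2k$ forces each small, and here one bounds $e^{2k-|A|-|B|-|C|}(|A||B|)^k k^{-2k}$ by noting $x\mapsto x^k e^{-x}$ (in $x=|A|+|B|+|C|$, roughly) is increasing on $[0,k]$... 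The cleanest route, which I would pursue, is: by AM–GM and the constraint $|A|,|B|\le|C|$, write $|A|+|B|+|C|\le 3|C|$ is wrong direction; rather use that the function to be bounded is largest when $|C|$ is as small as possible given $|C|\ge\max(|A|,|B|)$, so reduce to $|C|=\max(|A|,|B|)$, then it is a two-variable optimization in $|A|,|B|$ which I would finish by calculus/AM–GM to extract exactly the factor $4$. I would present this final optimization as the one genuinely delicate point and do it explicitly in the writeup.
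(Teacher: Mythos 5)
Your overall strategy---disjointify, then use independence of the Poisson process on $A'$, $B'$, $C'$ and multiply the factor $e^{-|C'|}$ by two Poisson tail bounds---is genuinely different from the paper's. The paper instead conditions on the total count $\#U$ where $U=A'\cup B'\cup C'$: given $\#U=l+m$, the pair $(\#A',\#B')$ is multinomially distributed, and the single estimate $\binom{l+m}{l}\bigl(|A'|/|U|\bigr)^l\bigl(|B'|/|U|\bigr)^m\le\bigl(2|A'|/|U|\bigr)^k\bigl(2|B'|/|U|\bigr)^k$ (valid because $|A'|,|B'|\le\tfrac12|U|$, which is exactly where the hypothesis $|A|\le|C|$, $|B|\le|C|$ enters) yields the stated bound with no exponential factors to absorb and no case analysis.

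There is a genuine gap in your route. The tail bound $\Prb(\mathrm{Po}(\mu)\ge k)\le(\mu/k)^k e^{k-\mu}$ is a Chernoff bound that is only valid for $\mu\le k$; for $\mu\gg k$ the left-hand side tends to $1$ while the right-hand side tends to $0$. Nothing in the lemma forces $|A'|,|B'|\le k$, so your main product bound is not a valid upper bound in general. This is not a technicality: the hypothesis $|A|,|B|\le|C|$ is essential to the lemma (take $C=\emptyset$ and $A,B$ disjoint with $|A|\gg k=|B|$; the probability is about $\Prb(\mathrm{Po}(k)\ge k)\approx\tfrac12$ while the claimed bound $\bigl(4|A||B|/(|A|+|B|)^2\bigr)^k\to0$), yet your central chain of inequalities never invokes that hypothesis --- it would have to be used precisely in the regime $\mu>k$ where your tail bound fails, i.e.\ exactly in the step you defer as ``the one genuinely delicate point.'' A smaller, repairable slip: writing $S=|A|+|B|+|C|$, the inequality $k^{-2k}\le(2/S)^{2k}$ holds when $S\le2k$, not when $S\ge2k$ as you assert; in fact no case split is needed there, since $e^{2k-S}(S/2k)^{2k}=\bigl((S/2k)\,e^{1-S/2k}\bigr)^{2k}\le1$ for all $S>0$. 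But the regime where $|A'|$ or $|B'|$ exceeds $k$ remains unaddressed, and patching it (using $\Prb(\mathrm{Po}(\mu)\ge k)\le1$ there together with $|A|,|B|\le|C|$) requires a further nontrivial argument that the proposal does not supply; the paper's conditioning argument sidesteps all of this.
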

\begin{proof}Let $A'=(A\setminus B)\setminus C$, $B'= (B\setminus A)\setminus C$, $C'=C\cup (A\cap B)$,
and $U=A\cup B\cup C=A'\cup B'\cup C$. We see that $A',B'$ and $C'$ are
pairwise disjoint so $|U|=|A'|+|B'|+|C'|$ and, since $\#(A\cap B)=0$,
that $\#A'\ge k$, $\#B'\ge k$.
We have
\begin{align*}
  \Prb(\#A\ge k&,\ \#B\ge k,\ \#(A\cap B)=0\text{ and }\#C=0)\\
&=  \Prb(\#A'\ge k,\ \#B'\ge k\text{ and }\#C'=0)\\
&=\sum_{l\ge k,m\ge
    k} \Prb(\#A'=l,\ \#B'= m\text{ and }\#U=l+m)\\
  &=\sum_{l\ge k,m\ge
    k} \Prb(\#A'=l,\ \#B'= m\ |\ \#U=l+m)\Prb(\#U=l+m)\\
  &\le \max_{l\ge k,m\ge
    k} \Prb(\#A'=l,\ \#B'= m\ |\ \#U=l+m)
\end{align*}
(the final line follows since $\sum_{l\ge k,m\ge k} \Prb(\#U=l+m)\le 1$).
\goodbreak

We have $|A'|\le|A|\le |C|\le|C'|$ so $|A'|\le \frac12|U|$ and similarly
$B'\le \frac12|U|$. Hence,
for $l,m\ge k$,
\begin{align*}
  \Prb(\#A'=l,\ \#B'= m\ |\ \#U=l+m)
  =& \binom{l+m}{l}\left(\frac {|A'|}{|U|}\right)^l
  \left(\frac {|B'|}{|U|}\right)^m\\
&\le 2^{l+m}\left(\frac {|A'|}{|U|}\right)^l
  \left(\frac {|B'|}{|U|}\right)^m\\
&\le 2^{2k}\left(\frac {|A'|}{|U|}\right)^k
  \left(\frac {|B'|}{|U|}\right)^k\\
&= \left(\frac {4|A'||B'|}{|U|^2}\right)^{k}\\
&= \left(\frac {4|A'||B'|}{(|A'|+|B'|+|C'|)^2}\right)^{k}.
\end{align*}
Finally, observe  that $|A'|\le|A|\le |C|\le |C'|$ and
$|B'|\le|B|\le |C|\le |C'|$ imply that
\begin{align*}
\frac {4|A'||B'|}{(|A'|+|B'|+|C'|)^2}&\le \frac
      {4|A||B'|}{(|A|+|B'|+|C'|)^2}\\
&\le \frac {4|A||B|}{(|A|+|B|+|C'|)^2}\\
&\le \frac {4|A||B|}{(|A|+|B|+|C|)^2}.
\end{align*}
which completes the proof.\end{proof}
\subsection{Proof of Theorem~\ref{t:centre}}
By hypothesis we have $k>0.4125\log n$. Also, we may assume that
$k<0.6 \log n$ since we already know that $G_{n,k}$ is connected whp
if $k\ge 0.6 \log n$.  Let $c'=\max\{c_1(0.25,1),c_2(0.25,1),1\}$ be
as given by Lemmas~\ref{l:small} and~\ref{l:edges} and let
$M=20000c'$. (We shall reuse some of the bounds we prove here in the
proof of Theorem~\ref{t:boundary} so these are convenient values.)
Tile $S_n$ with small squares of side length $s=\sqrt{\log n}/M$.  We
form a graph $\widehat G$ on these tiles by joining two tiles whenever
the distance between their centres is at most $2c'\sqrt{\log n}$.  We
call a pointset $\cP$ \emph{bad} if any of the following hold:
\begin{enumerate}
\item there exist two points that are joined in $G$ but  the tiles containing these
points are not joined in $\widehat G$,
\item there exist two points, at most distance $20000s$ apart, that are
  not joined,
\item there exists a half-disc based at a point of $\cP$ of radius
  $c'\sqrt{\log n}$ that is contained entirely in $S_n$ and contains no
  (other) point of $\cP$,
\item there exist two components in $G_{n,k}$ with Euclidean diameter
  at least $c'\sqrt{\log n}$,
\item there exists a component of diameter at most $c'\sqrt{\log n}$
  containing a vertex within distance $2c'\sqrt{\log n}$ of the
  boundary of $S_n$,
\end{enumerate}
and \emph{good} otherwise.  We see that our choice of $c'$ and $M$
together with Lemma~\ref{l:edges} imply that the probability that any
of the first three conditions occur is $O(n^{-1})$. By
Lemma~\ref{l:small} the probability of the fourth condition is
$O(n^{-1})$.  Since $k>0.4125\log n>\frac{1}{\log 25}\log n$,
Lemma~\ref{l:boundary} implies the probability of the last condition
is $O(n^{-\eps})$ for some $0<\eps<1$. (Alternatively this follows from
Theorem~\ref{t:boundary}).  Combining these we see that the
probability of a bad configuration is $O(n^{-\eps})$.

Suppose that $\cP$ is a good configuration but $G$ is not
connected. Then there exists a component $F$ with diameter at most
$c'\sqrt{\log n}$ not containing any vertex within $2c'\sqrt{\log n}$
of the boundary of~$S_n$. Let $A$ be the collection of tiles that
contain a point of $F$. Since the configuration is good $A$ is a
connected subset of $\widehat G$ containing no tile within
$c'\sqrt{\log n}$ of the boundary of~$S_n$. Moreover, the bound on the
diameter of $F$ implies that $A$ contains at most $16(c'M)^2$ tiles.

The heart of the proof is in the following lemma that bounds the
probability of $G$ having such a component.
\begin{lemma}\label{l:centre}
Suppose $A$ is a connected subset of $\widehat G$ 
containing no tile within $c'\sqrt{\log n}$ of the boundary
of~$S_n$. The probability that the configuration is good and that
$G$ has a component contained entirely inside $A$ meeting every tile
of $A$ is at most $O(11.3^{-k})$.
\end{lemma}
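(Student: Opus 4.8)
The plan is to fix the connected tile-set $A$ and bound the probability that $G$ has a component $F$ living exactly on the tiles of $A$, then union-bound over all such $A$. Write $W=\ar[A]$ with $r=c'\sqrt{\log n}$ for the ``extended region'': because the configuration is good, every edge of $G$ out of a point of $F$ has length at most $c'\sqrt{\log n}$ (condition 1 plus the definition of $\widehat G$), so all the $k$-nearest neighbours of every point of $F$ lie in $W$. Let $\#F=t$; since $F$ has diameter at most $c'\sqrt{\log n}$ and, by condition 2, no two points within distance $20000s$ can be unjoined, $F$ is ``dense'' and $t\ge k+1$. The key structural fact to extract is that, for $F$ to be a component, \emph{every} point of $W\setminus F$ must send all $k$ of its edges into $W$ as well, or more precisely have its $k$-th nearest neighbour distance small — otherwise it would reach outside $W$ and (being within $c'\sqrt{\log n}$ of $F$) could connect to $F$.

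Next I would split on whether there is a point of $\cP$ ``close'' to $F$, say within distance $\delta\sqrt{\log n}$ of $F$ for a small constant $\delta$ to be optimised. \emph{Case 1: no such point.} Then the annular region $F_{(\delta\sqrt{\log n})}\setminus F$ is empty of points of $\cP$, while $F$ itself contains $t\ge k+1$ points all having their $k$ neighbours inside $W$. Applying Lemma~\ref{l:kk} (or a direct Poisson computation) with $A,B$ two suitable half-disc-like regions attached to extreme points of $F$, each of area $\approx$ (something)$\log n$, and $C$ the forbidden empty annulus of area $\gtrsim\delta\sqrt{\log n}\cdot(\text{perimeter})$, gives a bound of the form $(\text{const})^{-k}$; the geometry is exactly as in \cite{MR2135151} but with the extra empty collar $C$ improving the constant. \emph{Case 2: there is a point $x$ within $\delta\sqrt{\log n}$ of $F$.} Then $x\notin F$, so $x$ must fail to connect to $F$, which forces the $k$-nearest-neighbour disc of $x$ to avoid $F$; combined with $F$ itself having $k+1$ points whose $k$-discs avoid the empty region around them, we again get two ``$\ge k$ points'' events on nearly-disjoint regions plus a constraint, and Lemma~\ref{l:kk} yields $(\text{const})^{-k}$. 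In both cases the constant one extracts, after choosing $\delta$ to balance the two, comes out above $11.3$.

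Finally I would handle the union bound: the number of connected tile-sets $A$ of the allowed size (at most $16(c'M)^2$ tiles, anchored among the $\Theta(n/\log n)$ possible positions) is $n^{1+o(1)}$, and for a \emph{fixed} shape the probability bound above is $(\text{const})^{-t}$ summed over $t\ge k+1$, which is $O((\text{const})^{-k})$ since $\text{const}>1$. Because $\text{const}>11.3>1$ and the polynomial factor $n^{1+o(1)}$ is absorbed (using $k>0.4125\log n$ and $11.3^{0.4125}>e$, i.e.\ $0.4125\log 11.3>1$), the whole sum is $o(1)$ — but the lemma only claims the per-$A$ bound $O(11.3^{-k})$, so the union bound is not even needed here; I just need the per-shape estimate. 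The main obstacle will be the geometric bookkeeping in choosing the regions $A$, $B$, $C$ to feed into Lemma~\ref{l:kk}: one must pick half-discs based at well-separated points of $F$ (using condition 3 and the diameter bound to guarantee they are honestly inside $S_n$ and large), ensure $|A|,|B|\le|C|$ so the lemma applies, and track how much the empty collar (Case 1) or the failed-connection disc of $x$ (Case 2) enlarges $|C|$, since that enlargement is precisely what pushes the base of the exponential from the $\sim 9$ or $\sim 10$ of the naive argument up to $11.3$.
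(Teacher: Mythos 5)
Your high-level strategy is the one the paper itself announces in its introduction (split according to whether there is a point of $\cP$ close to the putative component $F$: if not, a large region near $F$ is empty; if so, that point has an improbably small $k$-nearest-neighbour radius), and you correctly identify Lemma~\ref{l:kk} as the engine. But the proposal is missing essentially all of the content that turns this outline into the stated bound. First, the two ``$\ge k$ points'' regions fed to Lemma~\ref{l:kk} must have \emph{disjoint point sets}. The paper achieves this by taking one region $A'$ inside the circumscribed hexagon of $F$ (its $k+1$ points all lie in $F$) and, for the other, the lune $B=D(Q,r_0)\setminus D(P,r_0)$ where $P\in F$, $Q\notin F$ realise the distance $r_0$ from $F$ to the rest of the graph: the $k$ nearest neighbours of $Q$ are forced into $B$ and all lie outside $F$. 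Your Case~1 replaces this by ``two half-disc-like regions attached to extreme points of $F$'', both of whose point sets live in $F$, so the hypothesis $\#(A\cap B)=0$ of Lemma~\ref{l:kk} is not available; and your discrete case split on a threshold $\delta$ is a weaker substitute for the paper's continuous parametrisation by $x=r/\sqrt{|A'|/\pi}$, where one takes the minimum of two bounds (one from the six empty hexagon regions of total area $\ge 6|A'|$, one from the empty collar of area $\ge ((x+1)^2-1)|A'|$ obtained via the isoperimetric inequality) and optimises at $x=\sqrt7-1$. Your collar estimate ($\gtrsim\delta\sqrt{\log n}\cdot\mathrm{perimeter}$) is far too weak to recover this, and the constant $11.3$ --- which is the entire point of the lemma, since $0.4125=1/\log 11.3$ --- is asserted rather than derived; it depends on the exact lune area $|B|\le 0.61x^2|A'|$ and the factor $6$ from the hexagon.

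Second, there is a missing discretisation step. Even for a fixed tile-set $A$ there are continuum-many candidate positions for $F$, $P$, $Q$ and hence for the regions, so you cannot simply ``apply Lemma~\ref{l:kk}'' once per shape $A$ and sum over $t=\#F$. The paper resolves this by observing that all the regions are determined by $14$ points of $\cP$ (the six tangent points of the hexagon, their six $k$-th nearest neighbours, and $P$, $Q$), all lying within $c'\sqrt{\log n}$ of $A$ when the configuration is good, so that a union bound over the expected $O((\log n)^{14})$ such $14$-tuples converts the per-tuple bound $\alpha^{-k}$ (with $\alpha>11.3$) into the claimed $O(11.3^{-k})$. Without some such witnessing device your argument does not produce a probability bound at all. (You are right that the union over tile-sets $A$ belongs outside the lemma; that part is fine.)
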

\begin{proof}
Suppose that $F$ is a component of
$G$ meeting every tile in $A$.

The proof of this lemma naturally divides into three steps. In the
first step we define some regions based on the component $F$ some of
which must contain many points and some which must be empty. In the
second step we bound the area of these regions. In the final step we
bound the probability that these regions do indeed contain the
required number of points.

\begin{figure}
\begin{center}
\includegraphics[width=250pt]{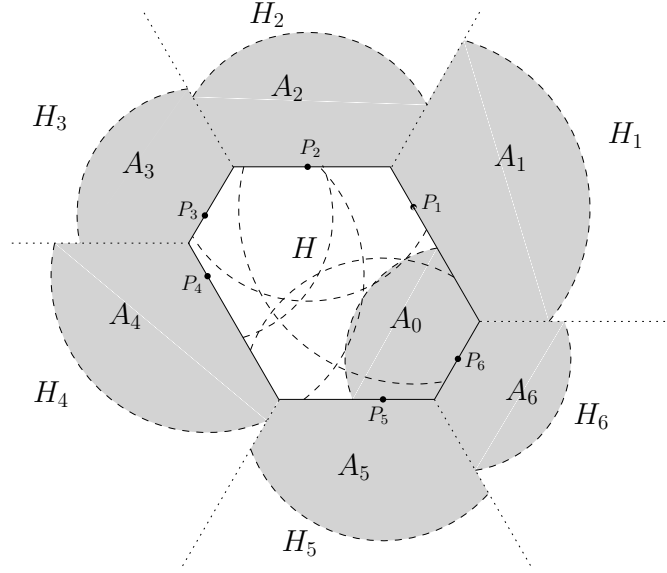}
\end{center}
\caption{The circumscribed hexagon $H$ and associated
   regions.}\label{f:upper1}
\end{figure}

{\it Step 1: Defining the regions.}  We use the following hexagonal
construction which was introduced by Balister, Bollob\'as, Sarkar and
Walters in~\cite{MR2135151}.  Let $H$ be the circumscribed hexagon of
the points of $F$ obtained by taking the six tangents to the convex
hull of $F$ at angles $0$ and $\pm 60\degree$ to the horizontal,and
let $H_1,\ldots, H_6$ be the regions bounded by the exterior angle
bisectors of $H$ as in Figure~\ref{f:upper1}.  Let $P_1,\ldots,P_6$ be
the points of $F$ on these tangents, and let $D_1,\ldots,D_6$ denote
the $k$-nearest neighbour disks of $P_1,\ldots,P_6$. For $1\le i\le 6$
let $A_i=D_i\cap H_i$. Let $A_0$ be the set $D_i\cap H$ with the
smallest area. We see that for each $1\le i\le 6$ the set $A_i$
contains no points of $\cP$. Also $A_0$ contains $k+1$ points all of
which must be in $F$ and thus in $A$.  Writing $A'$ for the set
$A_0\cap A$, we see that $A'$ contains at least $k+1$ points of $\cP$.

We also wish to take account of points near to but not contained in
$F$. Let $P\in F$ and $Q\in G\setminus F$ be vertices minimising the
distance between $F$ and $G\setminus F$.  Let $r_0=d(P,Q)$ and
$r=r_0-\sqrt2 s$.  Since, we are assuming that
every square of $A$ contains a point in $F$ 
we see that $\ar \setminus A$ contains no point of $\cP$. Indeed,
suppose there is a point of $\cP$ in $\ar \setminus A$. Then this point
is in $G\setminus F$ and is within $r_0$ of some point of $F$ which
contradicts the definition of $r_0$.

Obviously the points $Q$ and $P$ are not joined so, in particular, the
$k$ points nearest to $Q$ must all be nearer to $Q$ than $P$
is. Moreover, since $Q$ is the point closest to $F$, we see that these
$k$ points must all be further away from $P$ than $Q$ is.  Combining
these we see that these $k$ points lie in in the set
$B=D(Q,r_0)\setminus D(P,r_0)$.

Summarising all of the above, we see that $A'$ and $B$ each contain at
least $k$ points and $\ar \setminus A$ and $\bigcup_{i=1}^6 A_i$ are
both empty. The intersection $A'\cap B$ contains no points (so we can
think of them as disjoint) but $\ar $ and $\bigcup_{i=1}^6 A_i$ will
overlap significantly. Thus we will use Lemma~\ref{l:small} to form
two separate bounds, one based on $\ar \setminus A$ being empty and one
based on $\bigcup_{i=1}^6 A_i$ being empty.

\bigskip\noindent%
{\it Step 2: Bounding the area of the regions.}
In this step we assume that the configuration is good.

First we bound $|\bigcup_{i=1}^6 A_i|$. Since the configuration is
good each disc $D_i$ has radius at most $c'\sqrt{\log n}$ and each
point $P_i$ is more than $2c'\sqrt{\log n}$ from the boundary of
$S_n$. In particular $D_i$ is contained in $S_n$ for each
$i$. Moreover, since $|D_i\cap H_i|\ge |D_i\cap H|$ for each $1\le
i\le 6$, we see that $|A_i|\ge |A_0|$. Since the $H_i$ and
therefore the $A_i$ are disjoint, we have
\[
|\bigcup_{i=1}^6 A_i|\ge 6|A_0|\ge 6|A'|.
\]

The sets $B$ and $\ar $ both depend on $r$ so it is convenient to write
$r$ in terms of $|A'|$ by letting $x=r/(\sqrt{|A'|/\pi})$. 

Since $B=D(Q,r_0)\setminus D(P,r_0)$ a simple calculation shows that
$|B|=\left(\frac{\pi}{3}+\frac{\sqrt
  3}{2}\right)r_0^2$.  Since the configuration is good,
$r_0>20000s$ so
\[
  r=r_0-\sqrt2 s> r_0(1-10^{-4}).
\]
Hence,
\[
|B|=\left(\frac{\pi}{3}+\frac{\sqrt
  3}{2}\right)r_0^2 \le
\left(\frac{\pi}{3}+\frac{\sqrt
  3}{2}\right)\frac{x^2|A'|}{\pi(1-10^{-4})^2}< 0.61x^2|A'|.
\]

Finally we bound $\ar $. Let $D$ and $D'$ be balls of area $|A|$ and
$|A'|$ respectively. Since the configuration is good the the half-disc
of radius $c'\sqrt{\log n}$ about the right-most point of $F$ must
contain a point of $\cP$. In particular $r<r_0\le c'\sqrt{\log n}$,
and so $\ar $ is contained in $S_n$.  By the isoperimetric inequality
in the plane
\[
|\ar \setminus A|\ge |\dr\setminus D|,
\] 
and it easy to see that $|\dr\setminus D|\ge |\dr'\setminus D'|$.
Since $D'$ is a ball of radius $\sqrt{|A'|/\pi}$, $\dr'$ is a ball of
radius $\sqrt{|A'|/\pi}+r=(1+x)\sqrt{|A'|/\pi}$, and we have
\[
|\dr'\setminus D'|=((x+1)^2-1)|A'|.
\]
 
\bigskip
{\it Step 3: Bounding the probability of such a configuration.}  We
have seen that if there is such a component $F$ then there exist
regions as defined in Step 1. These regions are determined by 14 points:
the six points defining sides of the hexagonal hull, their six $k^{\rm
  th}$ nearest neighbour points and the points $P$ and $Q$; that is,
if there is such a component $F$ then there are 14 points of $\cP$
defining regions $A'$, $B$, $A_1,\ldots, A_6$ and $\ar $ with $\#A'\ge
k$, $\#B\ge k$, $\#(A'\cap B)=0$, and both $\#\bigcup_{i=1}^6A_i=0$
and $\#(\ar \setminus A)=0$. Moreover, if the configuration is good all
of these points must lie within $c'\sqrt{\log n}$ of $A$.

Let $Z$ be the event that there are 14
points of $\cP$ all within $c'\sqrt{\log n}$ of $A$ defining regions with the
above properties.  We have
\begin{align*}
\Prb(\text{there exists } F& \text{ and the configuration is good})\\
&\le\Prb( \text{Z and the configuration is good})\\&\le \Prb(Z).
\end{align*}

We bound the probability that $Z$ occurs (note we are not assuming
that the configuration is good). Fix a particular collection of 14
points of $\cP$ and let $Z'$ be the event that these particular points
witness $Z$. Note, since we are assuming these 14 points all lie with
$c'\sqrt{\log n}$ of $A$, the corresponding regions all lie entirely
within $S_n$.

We apply Lemma~\ref{l:kk} to the sets $A'$, $B$ together
with each of $\bigcup_{i=1}^6A_i$ and $\ar \setminus A$. 

First we form the bound based on $\#\bigcup_{i=1}^6A_i=0$. We have
$|A'|\le |\bigcup_{i=1}^6A_i|$ and, provided $x<3.13$, we have $|B|\le
0.61x^2<6|A'|\le |\bigcup_{i=1}^6A_i|$ so Lemma~\ref{l:kk}
applies. Thus we see that
\[
\Prb(Z')\le \left(\frac{4|A'||B|}{(|A'|+|(\bigcup_{i=1}^6A_i)|+|B|)^2}\right)^k
\le \left(\frac{4\cdot0.61x^2}{(7+0.61x^2)^2}\right)^k.
\]

Secondly we form a bound based on $\#(\ar \setminus A)=0$.  This time
$|B|\le 0.61 x^2|A'|\le ((x+1)^2-1)|A'|\le |\ar \setminus A|$ and,
provided that $x>\sqrt2-1$, we have $|\ar \setminus A|\ge |A'|$ so the
conditions of Lemma~\ref{l:kk} are satisfied. Thus
\[
\Prb(Z')\le \left(\frac{4|A'||B|}{(|A'|+|\ar \setminus A|+|B|)^2}\right)^k \le
\left(\frac{4\cdot0.61x^2}{((x+1)^2+0.61x^2)^2}\right)^k.
\]

It is easy to check that the maximum of the minimum of these two
bounds occurs when they are equal, i.e.,  when $x=\sqrt7-1$; at this point
they are $\alpha^{-k}$ for some $\alpha>11.3$. Therefore $\Prb(Z')\le
\alpha^{-k}$.

Since all 14 points must lie within $c'\sqrt{\log n}$ of $A$ there are
$O((\log n)^{14})$ ways of choosing them. Hence, the expected number
of 14 point sets for which $Z'$ occurs is is $O((\log
n)^{14}\alpha^{-k})=O(11.3^{-k})$. Thus $\Prb(Z)=O(11.3^{-k})$ and
the proof of the lemma is complete.
\end{proof}

Since the degree of vertices in $\widehat G$ is bounded and
$16(c'M)^2$ is a (large) constant, there are only a constant number of
connected sets of $\widehat G$ of size at most $16(c'M)^2$ which
contain a fixed tile, and therefore $O(n)$ such sets in total. Since
$k>0.4125\log n>\frac{1}{\log 11.3}\log n$ the expected number of small
components in $G$ with the configuration good is
$O(n(11.3)^k)=o(1)$. Thus
\begin{align*}
\Prb(\text{$G$ is not}&\text{ connected})\\&\le\Prb(\text{there is a
  small component and $\cP$ is good})+\Prb(\text{$\cP$ is
  bad})\\ &=o(1)+O(n^{-\eps})\\&=o(1),
\end{align*}
 so whp $G$ is connected.\qed

\subsection{Proof of Theorem~\ref{t:boundary}}
Much of this is the same as the proof of Theorem~\ref{t:centre} so we
shall concentrate on the differences.  This time, by hypothesis we
have $k>0.272\log n$ and again we may assume $k<0.6\log n$. We use
exactly the same tesselation of $S_n$ with small squares of side
length $s=\sqrt{\log n}/M$ where $c'=\max\{c_1(0.25,1),c_2(0.25,1)\}$
and $M=20000c'$ are given by Lemmas~\ref{l:small} and~\ref{l:edges} as
before.  Again we form a graph $\widehat G$ on these tiles by joining
two tiles whenever the distance between their centres is at most
$2c'\sqrt{\log n}$.

We need a slightly different definition of a bad pointset: the first four
conditions are exactly as before but we replace the fifth condtion by
\begin{enumerate}

\item[5.] there exists a component of diameter at most $c'\sqrt{\log n}$
  containing a vertex within distance $3c'\sqrt{\log n}$ of two sides
  of  $S_n$.
\end{enumerate}
Note that this condition, together with Condition~4 on the diameter of
small components, implies that for any small component at most one
side of $S_n$ can have points of this small component within distance
$2c'\sqrt{\log n}$ of it.

Since the tesselation is the same as in the proof of
Theorem~\ref{t:centre} we see that the probability that any of the
original four conditions hold is $O(n^{-1})$ as before.  Since
$k>0.272\log n$ Lemma~\ref{l:boundary} implies that the probability of
the new condition above is $O(n^{-\eps})$ for some
$0<\eps<1$. Combining these we see that the probability of a bad
configuration is $O(n^{-\eps})$.

Suppose that $\cP$ is a good configuration but not all points within
$\log n$ of the boundary of $S_n$ are contained in the giant
component. Then there exists a component $F$ with diameter at most
$c'\sqrt{\log n}$ containing a vertex within $\log n$ of the boundary
of~$S_n$. Let $A$ be the collection of tiles that contain a point of
$F$. Since the configuration is good $A$ is a connected subset of
$\widehat G$ and, as before, the bound on the diameter of $F$ implies
that $A$ contains at most $16(c'M)^2$ tiles. This time at most one
side of $S_n$ has any tiles of $A$ within $c'\sqrt{\log n}$ of it.

The following lemma, which is similar to Lemma~\ref{l:centre} bounds
the probability of such a small component.
\begin{lemma}
Suppose $A$ is a connected subset of $\widehat G$ such that at most
one side of $S_n$ has any tiles in $A$ within $c'\sqrt{\log n}$ of
it. The probability that the configuration is good and that $G$ has a
small component contained entirely inside $A$ which meets every square
of $A$ is at most $(6.3)^{-k}$.
\end{lemma}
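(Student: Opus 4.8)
The plan is to mimic the proof of Lemma~\ref{l:centre} almost verbatim, making the changes forced by the fact that $A$ may now touch (within $c'\sqrt{\log n}$) one side of $S_n$. The hexagonal construction of Step~1 goes through unchanged: we obtain the six regions $A_1,\dots,A_6$ that are empty, the region $A'=A_0\cap A$ containing at least $k+1$ points, the point $Q\in G\setminus F$ nearest to $F$, the annulus-like region $B=D(Q,r_0)\setminus D(P,r_0)$ containing the $k$ nearest neighbours of $Q$, and the blow-up region $\ar\setminus A$ which is empty. As before the fourteen defining points all lie within $c'\sqrt{\log n}$ of $A$ when the configuration is good, so the union bound over $O((\log n)^{14})$ choices contributes only a $(\log n)^{14}$ factor, which is absorbed.

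The one genuine difference is in Step~2, the area bounds, and it comes from the boundary. The bound $|\bigcup_{i=1}^6 A_i|\ge 6|A_0|\ge 6|A'|$ used the fact that each disc $D_i$ lay entirely inside $S_n$; now the disc $D_i$ nearest the boundary side may be cut by that side, so we can only guarantee that it contains a \emph{half}-disc's worth, losing a factor of $2$ there. More importantly, the isoperimetric bound $|\ar\setminus A|\ge|\dr'\setminus D'|$ for a \emph{full} disc $D'$ of area $|A'|$ must be replaced: since $A$ touches one side, the relevant extremal shape is a half-disc against a line, and the correct comparison is $|\ar\setminus A|\ge$ (the excluded area of an $r$-blow-up of a half-disc of area $|A'|$), which for a half-disc of radius $\rho=\sqrt{2|A'|/\pi}$ is the half-annulus-plus-rectangle of area $\big((x\sqrt2+1)^2-1\big)|A'|/\ldots$ — in any case a \emph{smaller} gain than the factor $(x+1)^2-1$ we had in the centre. (The factor-of-$2$ loss in both the hexagon bound and the blow-up bound is exactly why the constant drops from $11.3$ to $6.3$.) I would redo the two applications of Lemma~\ref{l:kk} with these weakened area bounds: one bound based on $\#\bigcup_{i=1}^6 A_i=0$ using $|\bigcup A_i|\ge 3|A'|$, and one based on $\#(\ar\setminus A)=0$ using the half-disc isoperimetric estimate, then optimise over $x=r/\sqrt{|A'|/\pi}$ to get that the minimum of the two bounds is at most $\alpha^{-k}$ for some $\alpha>6.3$.

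After the lemma, the conclusion of Theorem~\ref{t:boundary} follows as in the proof of Theorem~\ref{t:centre}: there are $O(n)$ connected tile-sets $A$ of bounded size meeting the boundary, and for each the probability (with the configuration good) of a small component inside $A$ is $O(6.3^{-k})$; since $k>0.272\log n$ and $0.272>1/\log 6.3\approx 0.543\ldots$ — wait, here one must be careful: $1/\log 6.3 \approx 0.543$, so a naive union bound over $O(n)$ sets does \emph{not} close. The point is that we are not bounding the number of small components but asking whether \emph{any} tile within $\log n$ of the boundary lies in a small component; the number of such tiles is only $O(n^{1/2+o(1)}/\log n)$ (a strip of width $\log n$ around the perimeter), and $0.272>1/(2\log 6.3)\approx 0.272$. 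So the expected number of boundary-touching small components is $O(n^{1/2+o(1)}6.3^{-k})=O(n^{1/2-0.272\log 6.3+o(1)})=O(n^{-\eps})$ for some $\eps>0$.

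**Main obstacle.** The hard part is Step~2: getting the right extremal (isoperimetric) bound for $|\ar\setminus A|$ and for $|\bigcup A_i|$ when $A$ abuts a side of the square, i.e.\ identifying the half-disc as the correct comparison shape and computing the resulting area as a function of $x$. Everything else — the hexagonal construction, the two applications of Lemma~\ref{l:kk}, the optimisation over $x$, and the final union-bound arithmetic with the $n^{1/2+o(1)}$ count of boundary tiles — is routine once that bound is in hand.
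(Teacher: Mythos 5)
Your overall architecture (empty tangent regions, the pair $A'$, $B$, the blow-up $\ar\setminus A$, two applications of Lemma~\ref{l:kk}, optimisation over $x$) matches the paper, but there are three genuine gaps in how you handle the boundary, and together they mean your argument does not reach $6.3^{-k}$.

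First, you keep the full six-sided hexagon and claim the disc cut by the boundary costs only ``a factor of $2$.'' That is not justified, and in the worst case it is false: if the component touches the side $E$, the tangent line parallel to $E$ essentially coincides with $E$, and the corresponding region $A_i$ lies almost entirely \emph{outside} $S_n$. Emptiness outside $S_n$ carries no probabilistic cost, so that region contributes essentially nothing, and the adjacent regions at $\pm 60\degree$ are also partially cut. The paper avoids this by changing the construction: it uses only four tangent lines (at $90\degree$ and $\pm30\degree$ to $E$) together with the edge $E$ itself as the fifth side, and reflects in the half-plane $E_+$; each of the four discs $D_i$ then meets no side other than possibly $E$, giving the clean bound $|\bigcup_{i=1}^4 A_i|\ge 4|A'|$. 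Second, even granting your factor-of-$2$ loss (so $|\bigcup A_i|\ge 3|A'|$), the optimisation does not give $6.3$: equating $\bigl(\frac{4\cdot0.61x^2}{(4+0.61x^2)^2}\bigr)^k$ with the half-disc bound $\bigl(\frac{4\cdot0.61x^2}{((1+x/\sqrt2)^2+0.61x^2)^2}\bigr)^k$ forces $(1+x/\sqrt2)^2=4$, i.e.\ $x=\sqrt2$, where the common value is about $5.58^{-k}$. Since $1/(2\log 5.58)\approx 0.291>0.272$, this is not enough for Theorem~\ref{t:boundary}; you need the $4|A'|$ from the modified construction to reach the equalisation point $x=\sqrt2(\sqrt5-1)$ and $\alpha>6.3$. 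Third, you overlook that in the boundary case the second Lemma~\ref{l:kk} bound fails for large $x$: one needs $|B|\le 0.61x^2|A'|\le((1+x/\sqrt2)^2-1)|A'|$, which holds only for $x$ up to about $12$ (unlike the centre case, where $0.61x^2\le(x+1)^2-1$ for all $x$). The paper therefore introduces a third, cruder bound $\Prb(Z')\le(1+x/\sqrt2)^{-2k}\le 80^{-k}$ for $x\ge12$, using only $\#A'\ge k$ and $\#((\ar\cap S_n)\setminus A)=0$; without this the case of large $x$ is simply uncovered. Your remarks about the final union bound (replacing $O(n)$ tile-sets by $O(\sqrt{n\log n})$ boundary tile-sets) are correct, but that belongs to the deduction of the theorem, not to the lemma itself.
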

\begin{remark}
  Obviously this lemma is only of interest for sets $A$ near the
  boundary, since otherwise Lemma~\ref{l:centre} is stronger.
\end{remark}
\begin{proof}
The proof divides into the same three steps as Lemma~\ref{l:centre}.

\bigskip\noindent%
{\it Step 1: Defining the regions.}  As before
suppose that $F$ is a component of $G$ meeting every tile in $A$. Let
$E$ be the (almost surely unique) side of $S_n$ closest to $F$.

\begin{figure}
\begin{center}
\includegraphics[width=250pt]{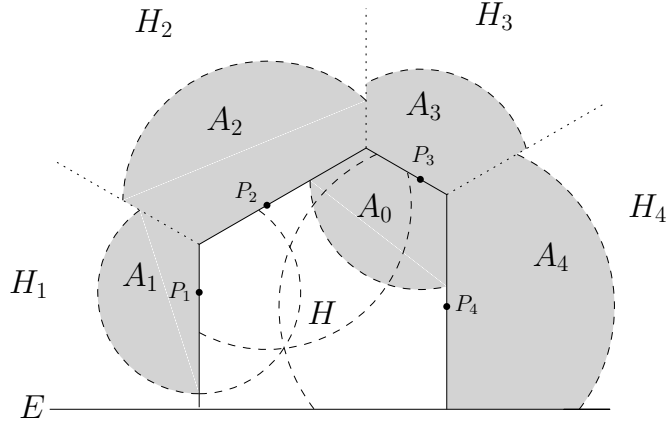}
\end{center}
 \caption{The circumscribing set $H$ and associated
   regions.}\label{f:upper2}
\end{figure}

This time let $H$ be the region bounded by the four interior sides of
the circumscribed hexagon of the points of $F$ obtained by taking four of
the tangents to the convex hull of $F$ at angles $90\degree$ and $\pm
30\degree$ to $E$, together with $E$ as in
Figure~\ref{f:upper2}.  Let $H_1,\ldots, H_4$ be the regions bounded
by the exterior angle bisectors of $H$ and $E$.  Let $P_1,\ldots,P_4$
be the points of $F$ on these tangents, and let $D_1,\ldots,D_4$ denote the
$k$-nearest neighbour disks of $P_1,\ldots,P_4$. For $1\le i\le 4$ let
$A_i=D_i\cap H_i$. Let $A_0$ be the set $D_i\cap H$ with the smallest
area and write $A'$ for the set $A_0\cap A$.  Exactly as before we see
that for $1\le i\le 4$ the set $A_i$ is empty, and that $A'$ must
contain at least $k+1$ points of $\cP$.

As before let $P\in F$ and $Q\in G\setminus F$ be vertices minimising
the distance between $F$ and $G\setminus F$, $r_0=d(P,Q)$ and
$r=r_0-\sqrt2 s$. Again, since $F$ meets every tile of $A$ we see that
$\ar \setminus A$ must be empty. Also, as before, the set
$B=(D(Q,r_0)\setminus D(P,r_0))\cap S_n$ must contain at least $k$
points.

\bigskip\noindent%
{\it Step 2: Bounding the area of the regions.} In this step we
assume the configuration is good.

First we bound $|\bigcup_{i=1}^4 A_i|$. Similarly to before we see that each
disc $D_i$ has radius at most $c'\sqrt{\log n}$ so meets no side of
$S_n$ apart from possibly $E$. Thus, we have $|D_i\cap H_i|\ge
|D_i\cap H|$ for each $1\le i\le 4$, so we see that $|A_i|\ge
|A_0|$. As before the $H_i$ and therefore the $A_i$ are disjoint so
\[
|\bigcup_{i=1}^4 A_i|\ge 4|A_0|\ge 4|A'|.
\]

As before let $x=r/\sqrt{|A'|/\pi}$ and exactly as in the proof of
Lemma~\ref{l:centre} we have $|B|<0.61x^2|A'|$. 

Finally we bound $\ar \setminus A$.  Consider the point of $F$ furthest
from $E$ and the half disc of radius $c'\sqrt{\log n}$
about that point facing away from $E$. Since no point of $F$ is within
$c'\sqrt{\log n}$ of any side of $S_n$ apart from $E$, this half disc
is entirely inside $S_n$, and so must contain a
point of $\cP$ (which is obviously not in $F$). Therefore, as before,
$r<r_0\le c'\sqrt{\log n}$. Thus $\ar \cap S_n=\ar \cap E_+$ where $E_+$
denotes the halfplane bounded by $E$ that contains $S_n$.

This time let $D$ and $D'$ be half discs of area $|A|$ and $|A'|$
respectively centred on $E$. Then, by the isoperimetric inequality in the
half plane $E_+$ (an easy consequence of the same inequality in the whole
plane),
\[
|(\ar \cap E_+) \setminus A|\ge |(\dr\cap E_+)\setminus D|\ge |(\dr'\cap E_+)\setminus D'|.
\] 
Now $D'$ is half a disc of radius $\sqrt2 \sqrt{|A'|/\pi}$ and
$\dr'\cap E_+$
is half a disc of radius $\sqrt2
\sqrt{|A'|/\pi}+r=(1+x/\sqrt2)\sqrt{2|A'|/\pi}$, so this time we we
have
\[
 |(\dr'\cap E_+)\setminus D'|=((1+x/\sqrt2)^2-1)|A'|.
\]

\bigskip\noindent%
{\it Step 3: Bounding the probability of such a configuration.}  We
have seen that if there is such a component $F$ then there exist
regions as defined above. These regions are determined by 10 points:
the four points defining sides of the hexagonal hull, their four
$k^{\rm th}$ nearest neighbour points and the points $P$ and $Q$; that
is, if there is such a component $F$ then there are 10 points of $\cP$
defining regions $A'$, $B$, $A_1,\ldots, A_4$ and $\ar $ with $\#A'\ge
k$, $\#B\ge k$, $\#(A'\cap B)=0$, and both $\#\bigcup_{i=1}^4A_i=0$
and $\#((\ar\cap S_n) \setminus A)=0$. Again, if the configuration is good, all
these points must lie within $c'\sqrt{\log n}$ of $A$.

Similarly to before, let $Z$ be the event that there are 10
points of $\cP$ all within $c'\sqrt{\log n}$ of $A$ defining regions with the
above properties.  Again
\begin{align*}
\Prb(\text{there exists } F& \text{ and the configuration is good})\\
&\le\Prb( \text{Z and the configuration is good})\\&\le \Prb(Z)
\end{align*}
so, as before, we bound $\Prb(Z)$.

Fix a particular collection of 10 points and let $Z'$ be the event
that these 10 points witness $Z$.  Note, since we are assuming these
10 points all lie with $c'\sqrt{\log n}$ of $A$, the regions
$A',A_1,\ldots,A_4$ all lie entirely within $S_n$. By definition, $B$ and
$(\ar\cap S_n)\setminus A$ also lie in $S_n$.

Again we apply Lemma~\ref{l:kk} to the sets $A'$, $B$ together with
each of $\bigcup_{i=1}^4A_i$ and $(\ar\cap S_n) \setminus A$. This time,
however, neither bound will be valid for large $x$ so we form a third
bound based just on the two sets $A'$ and $(\ar\cap S_n) \setminus A $.

As before we base the first bound on $\#\bigcup_{i=1}^4A_i=0$. We have
$|A'|\le |\bigcup_{i=1}^4A_i|$ and, provided $x<2.56$, we have $|B|\le
0.61x^2|A'|<4|A'|\le |\bigcup_{i=1}^4A_i|$ so Lemma~\ref{l:kk} implies
\[
\Prb(Z')\le \left(\frac{4|A'||B|}{(|A'|+|(\bigcup_{i=1}^4A_i)|+|B|)^2}\right)^k
\le \left(\frac{4\cdot0.61x^2}{(5+0.61x^2)^2}\right)^k.
\]

The second bound based on $\#((\ar\cap S_n) \setminus A)=0$ is also very similar
to before.  However, this time the middle inequality in 
\[
|B|\le 0.61 x^2|A'|\le ((1+x/\sqrt 2)^2-1)|A'|\le
|(\ar\cap S_n) \setminus A|
\] is not valid for all $x$, but it is valid for all $x<12$.
Also provided that $x>2-\sqrt2$, we have $|(\ar\cap S_n) \setminus A|\ge |A'|$ so
for $2-\sqrt 2<x<12$ the conditions of Lemma~\ref{l:kk} are
satisfied. Thus
\[
\Prb(Z')\le \left(\frac{4|A'||B|}{(|A'|+|(\ar\cap S_n) \setminus A|+|B|)^2}\right)^k \le
\left(\frac{4\cdot0.61x^2}{((1+x/\sqrt2)^2+0.61x^2)^2}\right)^k.
\] 

Since neither bound applies for large $x$ we form a third bound based
on the two sets $A'$ and $(\ar\cap S_n) \setminus A$. We know $A'$ contains at
least $k$ points and $(\ar\cap S_n) \setminus A$ is empty. This has probability
at most
\[
\Prb(Z')\le \left(\frac{|A'|}{|A'|+|(\ar\cap S_n) \setminus A|}\right)^k\le
\frac{1}{(1+x/\sqrt 2)^{2k}}
\]
which is less than $80^{-k}$ for all $x\ge 12$.

As before the maximum of the minimum of the first two bounds occurs
when they are equal at $x=\sqrt2(\sqrt 5-1)$; at this point they are
$\alpha^{-k}$ for some $\alpha>6.3$. Moreover the third bound is tiny
in comparison. Thus, in all cases, $\Prb(Z')\le \alpha^{-k}$ for some $\alpha>6.3$.

Since all 10 points must lie within $c'\sqrt{\log n}$ of $A$ there are
$O((\log n)^{10})$ ways of choosing them. Hence, similarly to before,
the expected number of 10 point sets for which $Z'$ occurs is is
$O((\log n)^{10}\alpha^{-k})=O(6.3^{-k})$. Hence
$\Prb(Z)=O(6.3^{-k})$ and the proof of the lemma is complete.
\end{proof}

The remainder of the proof is very similar to before. There are only a
constant number of connected sets of $\widehat G$ of size at most
$16(c'M)^2$ which contain a fixed tile, and therefore $O(\sqrt{n\log
  n})$ such sets which contain a tile within distance $\log n$ of the
boundary of $S_n$.  Since $k>0.272\log n> \frac{1+\eps'}{\log
  6.3}\log(\sqrt {n})$ for some $\eps'>0$ the expected
number of small components of $G$ that contain a vertex within
distance $\log n$ of the boundary of $S_n$ when the configuration is
good is $O(\sqrt {n\log n}(6.3)^{-k})=o(n^{-\eps'/2})$. Let
$\eps=\min(\eps'/2,1)$ and $p$ be the the probability that there
exists a point $\cP$ within $\log n$ of the boundary of $S_n$ that is
not in the giant component. Then
\begin{align*}
p&\le\Prb(\text{there is a small boundary component and $\cP$ is
  good})+\Prb(\text{$\cP$ is bad})\\
&=o(n^{-\eps})+O(n^{-\eps})\\&=O(n^{-\eps})
\end{align*}
as claimed.\qed
\subsection*{Open Questions}
In this paper we have proved two results about the behaviour of the
small components in the graph $G_{n,k}$. However, several question
about their properties remain open. We are interested in the behaviour
near the connectivity threshold so, in particular, we assume in the 
following questions that $k$ is at least $0.3\log n$.
\begin{question}
  Must the small components of $G_{n,k}$ be isolated? More precisely,
  is it the case that, whp, there do not exist two small components
  within distance of $O(\sqrt{\log n})$ of each other.
\end{question}
\noindent%

Since the first draft of this paper
Falgas-Ravry~\cite{Falgas-poissondistr} has answered this question in
the affirmative provided that the probability that $G$ is connected is
not too small: more precisely he proves it whenever
$\Prb(G$~is~connected$)=\Omega(n^{\gamma})$ (where $\gamma$ is an
absolute constant).

\begin{question}
  How many vertices do small components contain? 
\end{question}
\noindent%
It is immediate from Lemma~6 of~\cite{MR2135151} (quoted as
Lemma~\ref{l:small} of this paper) that all small components contain
$O(k)$ vertices.  If the lower bound construction of Balister,
Bollob\'as, Sarkar and Walters in~\cite{MR2135151} is extremal then,
as the authors remark there, all small components would contain
$k+O(1)$ vertices.

\begin{question}
  Are all the small components convex in the sense that all points of
  $\cP$ within the convex hull of a small component are actually part
  of the small component?
\end{question}


\bibliography{mybib}{}
\bibliographystyle{abbrv}

\end{document}